\title{Fractal Boundaries of Constructivity: A Meta-Theoretical Critique of Countability and Continuum}
\author{Stanislav Semenov \\
\href{mailto:stas.semenov@gmail.com}{stas.semenov@gmail.com} \\
\href{https://orcid.org/0000-0002-5891-8119}{ORCID: 0000-0002-5891-8119}}
\date{March 25, 2025}
\theoremstyle{definition}
\newtheorem{definition}{Definition}[section]
\newtheorem*{notation}{Notation}
\newtheorem{example}{Example}[section]
\theoremstyle{plain}
\newtheorem{theorem}[definition]{Theorem}
\newtheorem{lemma}[definition]{Lemma}
\newtheorem{corollary}[definition]{Corollary}
\theoremstyle{remark}
\begin{document}

\maketitle

\begin{abstract}
    All constructive methods employed in modern mathematics produce only countable sets, even when designed to transcend countability. We show that any constructive argument for uncountability—excluding diagonalization techniques—effectively generates only countable fragments within a closed formal system. We formalize this limitation as the \emph{fractal boundary of constructivity}, the asymptotic limit of all constructive extensions under syntactically enumerable rules. A central theorem establishes the impossibility of fully capturing the structure of the continuum within any such system. We further introduce the concept of \emph{fractal countability}, a process-relative refinement of countability based on layered constructive closure. This provides a framework for analyzing definability beyond classical recursion without invoking uncountable totalities. We interpret the continuum not as an object constructively realizable, but as a horizon of formal expressibility.
\end{abstract}

\subsection*{Mathematics Subject Classification}
03D80 (Computability and recursion), 03E10 (Ordinal and cardinal numbers), 03B70 (Logic in computer science)

\subsection*{ACM Classification}
F.4.1 Mathematical Logic, F.1.1 Models of Computation

\section*{Introduction}

The classical set-theoretic framework, rooted in Cantor's theory of cardinalities, postulates the existence of uncountable sets such as the real numbers \( \mathbb{R} \), whose cardinality strictly exceeds that of the natural numbers \( \mathbb{N} \). The standard justification for this hierarchy relies on arguments that are, from a formal standpoint, entirely constructive: they are finite derivations within countable formal systems such as ZFC (Zermelo–Fraenkel set theory with the Axiom of Choice).

Yet, this raises a fundamental paradox. If all formal derivations, definitions, and constructive processes are themselves countable in number, how can they possibly reach or describe an uncountable totality? No matter how elaborate or extensive such constructions are, they remain confined to a countable universe of formal representations. This suggests a profound meta-theoretical boundary between what is formally expressible and what is assumed to exist by set-theoretic abstraction.

In particular, any constructive or algorithmic process that attempts to build or approximate uncountable sets necessarily generates only a countable subset. Iteratively extending such processes still yields, at most, a countable union. Thus emerges a phenomenon we term the \emph{fractal boundary of constructivity}—a conceptual limit beyond which no constructive method can transcend, despite appearing to expand into increasingly vast domains.

This paper develops a meta-theoretical critique of uncountability that does not rely on Cantor's diagonal method. For a constructive treatment of diagonalization and its limitations, we refer the reader to \cite{Semenov2025Constructive}. Instead, we focus on other classical approaches—such as power set constructions, mappings from \( \mathbb{N} \) to \( \{0,1\} \), and nested intervals—and show that all such procedures remain within the bounds of constructive countability.

We formalize the notion that any constructive extension of a countable set remains countable, and explore the implications of this for the ontological status of the continuum. In particular, we argue that the continuum, in the absence of non-constructive postulates, functions as an inaccessible horizon: a boundary that can be approached but never crossed by any constructive method.

Our goal is not to deny the consistency or utility of classical set theory, but to sharpen the distinction between formal constructibility and ontological assumption. This has consequences for the interpretation of uncountability, the continuum hypothesis, and the foundations of mathematical logic.

Our contribution is twofold. First, we offer a formal synthesis: the impossibility of constructing an uncountable set of individually definable real numbers is framed as a structural consequence of syntactic countability. While this result aligns with known cardinality constraints, we reinterpret it through the lens of formal expressiveness and define what we call the \emph{fractal boundary of constructivity}.

Second, we connect this limitation to the framework of reverse mathematics, showing that successive axiomatic extensions within second-order arithmetic yield only progressively enriched countable fragments, never crossing into true uncountability without non-constructive postulates. This combination of formal closure theorems, axiomatic mapping, and ontological reinterpretation constitutes the primary novelty of the present work.

\section{Constructive Systems and Countability}

We begin by examining the structural limitations of formal systems that aim to describe or generate mathematical objects. In this section, we define what we mean by a \emph{constructive system}, and we show that such systems, by their very nature, can only produce countable collections of definable or constructible entities.

A constructive system, for our purposes, is a formal system \( \mathcal{F} \) consisting of:
\begin{itemize}
    \item a countable alphabet of symbols;
    \item a finite or recursively enumerable set of formation and inference rules;
    \item a countable set of axioms (often also recursively enumerable).
\end{itemize}

This definition encompasses a wide range of systems used in mathematics and logic, including Peano arithmetic, second-order arithmetic, type theory, and fragments of ZFC. Importantly, such systems are intended to represent all that can be generated via effective procedures — i.e., they correspond to algorithmically verifiable constructions.

\subsection{Countability of Formulas and Derivations}

Since all syntactic entities in \( \mathcal{F} \) are built from a finite or countable set of symbols using recursive rules, it follows immediately that:
\begin{itemize}
    \item The set of all well-formed formulas in \( \mathcal{F} \) is countable.
    \item The set of all derivable theorems in \( \mathcal{F} \) is countable.
\end{itemize}

This is a standard result in mathematical logic (see, e.g., \cite{Shoenfield1967}, \cite{Enderton2001}), and it implies that any object definable within such a system — be it a number, a sequence, a function, or a set — must be describable by a finite derivation over countable syntax.

\subsection{Constructible Sets and Functions}

We now turn to infinite mathematical objects, such as sequences or functions, and ask: how many such objects can be constructively defined?

\begin{definition}
Let \( \mathcal{F} \) be a constructive formal system. An infinite binary sequence \( s \in \{0,1\}^{\mathbb{N}} \) is said to be \emph{constructible in \( \mathcal{F} \)} if there exists a finite definition or algorithm, expressible in \( \mathcal{F} \), that generates each bit \( s_n \).
\end{definition}

Examples of constructible sequences include:
\begin{itemize}
    \item Periodic sequences (e.g., \( 010101\ldots \));
    \item Outputs of Turing machines or recursive functions;
    \item Any computable real number in binary expansion (e.g., \( \pi, e, \sqrt{2} \) as approximated algorithmically).
\end{itemize}

Let us now make the core observation of this section precise.

\begin{lemma}
\label{lem:countable-constructibles}
Let \( \mathcal{F} \) be a constructive formal system. Then the set of all infinite binary sequences constructible in \( \mathcal{F} \) is countable.
\end{lemma}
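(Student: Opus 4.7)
The plan is to argue by exhibiting a surjection from a countable set of finite syntactic objects onto the set of constructible sequences, then invoke the standard fact that the image of a countable set is countable.

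First, I would appeal directly to the observations of the preceding subsection: under the standing assumption that $\mathcal{F}$ has a countable alphabet and recursively enumerable formation rules, the set of finite strings over that alphabet is countable, and hence so is the subset consisting of well-formed formulas (or, equivalently, finite algorithmic descriptions) of $\mathcal{F}$. Let me call this countable set $D(\mathcal{F})$.

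Next, by the definition of constructibility in $\mathcal{F}$, every constructible sequence $s \in \{0,1\}^{\mathbb{N}}$ is witnessed by at least one $d \in D(\mathcal{F})$ whose associated procedure outputs $s_n$ on input $n$. Choosing any such witness for each constructible $s$ (for example, via a fixed enumeration of $D(\mathcal{F})$, picking the least $d$ that yields $s$), we obtain an injection from the set $C(\mathcal{F})$ of constructible sequences into $D(\mathcal{F})$. Equivalently, the partial map $\varphi : D(\mathcal{F}) \rightharpoonup \{0,1\}^{\mathbb{N}}$ sending each description to the unique sequence it generates (when defined) has image exactly $C(\mathcal{F})$. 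Since a subset, and hence the image, of a countable set is countable, $C(\mathcal{F})$ is countable.

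The one subtle point, which I would flag but not belabor, is the well-definedness of $\varphi$: a given $d \in D(\mathcal{F})$ must specify at most one sequence. This is guaranteed by the phrase \emph{generates each bit $s_n$} in the definition, which requires a functional dependence of $s_n$ on $n$; ambiguous or non-terminating descriptions simply fall outside the domain of $\varphi$ and contribute nothing to $C(\mathcal{F})$. I expect this to be the main obstacle only in the sense that it forces one to fix a convention about what counts as a valid algorithmic description — once that is in place, the counting argument is immediate and requires no further machinery beyond the countability of finite strings over a countable alphabet.
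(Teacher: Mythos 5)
Your argument is correct and follows essentially the same route as the paper's own proof: both rest on the countability of finite expressions over a countable alphabet and the fact that each expression determines at most one sequence. You simply make explicit the witness-selection and well-definedness details that the paper's two-sentence proof leaves implicit.
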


\begin{proof}
Each constructible sequence is defined by a finite expression or algorithm written in the syntax of \( \mathcal{F} \). Since there are only countably many such finite expressions, the total number of constructible sequences is countable.
\end{proof}

An analogous result holds for sets of natural numbers, real numbers, or functions from \( \mathbb{N} \) to \( \mathbb{N} \), provided that they are defined constructively within \( \mathcal{F} \). In all cases, the total number of constructible instances remains countable.

\subsection{Implication for Uncountability}

It follows that if one seeks to demonstrate the existence of an uncountable set (such as the continuum), then any attempt to do so purely within a constructive system must necessarily fail to exhibit or generate more than countably many of its elements. The remaining elements — those that purportedly make the set uncountable — lie beyond the reach of construction, and must be assumed to exist non-constructively.

This conclusion sets the stage for the following sections, where we examine how repeated constructive extensions still fail to escape the bounds of countability, and how this phenomenon leads to what we term the fractal boundary of constructivity.

\section{Fractal Boundary of Constructive Extension}

In the previous section, we established that any constructive formal system \( \mathcal{F} \) can generate only a countable collection of definable objects. This includes all computable real numbers, sequences, functions, and sets that can be described algorithmically or syntactically within \( \mathcal{F} \). The natural question arises: what happens if we attempt to transcend this boundary through iterative or layered construction?

\subsection{Iterative Extensions within Constructive Systems}

Let us suppose we define a process that generates a countable set \( S_0 \) of objects (e.g., computable reals), and then applies a constructive transformation to produce an extended set \( S_1 \supseteq S_0 \). Repeating this process, we obtain a sequence:
\[
S_0 \subseteq S_1 \subseteq S_2 \subseteq \dots
\]
where each \( S_{n+1} \) is derived from \( S_n \) by applying a constructively definable extension operator \( \mathcal{E} \). For example, \( \mathcal{E} \) may be a function that constructs limit points, closures under computable convergence, or compositions of definable functions.

\begin{definition}
Let \( \mathcal{F} \) be a constructive formal system. A sequence \( \{S_n\} \) of subsets of \( \mathbb{R} \) is called a \emph{constructive extension chain} if:
\begin{enumerate}
    \item \( S_0 \) is constructible in \( \mathcal{F} \);
    \item For each \( n \), \( S_{n+1} = \mathcal{E}(S_n) \) for some constructively definable operator \( \mathcal{E} \) expressible in \( \mathcal{F} \).
\end{enumerate}
\end{definition}

The hope might be that such an inductive process could "fill out" more and more of the continuum — perhaps even exhaust it in the limit. However, the following result shows that this is not the case.

\begin{theorem}[Closure under Constructive Extension]
Let \( \{S_n\} \) be a constructive extension chain within a formal system \( \mathcal{F} \). Then the union \( S_\infty = \bigcup_{n=0}^\infty S_n \) is countable.
\end{theorem}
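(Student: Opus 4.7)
The plan is to reduce the statement to Lemma~\ref{lem:countable-constructibles} by induction on $n$, and then close the argument with the classical fact that a countable union of countable sets is countable. The guiding intuition is that every element of $S_\infty$ admits a finite syntactic description inside $\mathcal{F}$, namely a code for some finite iterate of $\mathcal{E}$ together with a finite description of an element of $S_0$ that it was applied to.

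Concretely, I would first prove by induction on $n$ that each $S_n$ consists of objects constructible in $\mathcal{F}$ in the sense of the lemma. The base case is immediate from clause~(1) of the definition of a constructive extension chain. For the inductive step, assume each element of $S_n$ carries a finite $\mathcal{F}$-description; since $\mathcal{E}$ is a constructively definable operator expressible in $\mathcal{F}$, any element of $S_{n+1} = \mathcal{E}(S_n)$ can be specified by concatenating the finite code of $\mathcal{E}$ with the finite description of an element of $S_n$. Hence $S_{n+1}$ is again a set of $\mathcal{F}$-constructible objects, and Lemma~\ref{lem:countable-constructibles} yields that $S_{n+1}$ is countable.

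Given countability of each $S_n$, the union step can be handled in two equivalent ways. One option is the standard diagonal enumeration: fix for each $n$ an injection $e_n \colon S_n \hookrightarrow \mathbb{N}$ and form the pair $(n, e_n(x))$ to obtain an injection $S_\infty \hookrightarrow \mathbb{N} \times \mathbb{N}$. The more in-keeping alternative is to note directly that every element of $S_\infty$ admits a finite $\mathcal{F}$-description of the form ``apply $\mathcal{E}$ exactly $n$ times to the $k$-th element of $S_0$,'' so that $S_\infty$ itself satisfies the hypothesis of Lemma~\ref{lem:countable-constructibles} and is countable on that ground alone.

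The main obstacle I anticipate is not the arithmetic of countability but the informal looseness of the phrase ``constructively definable operator $\mathcal{E}$ expressible in $\mathcal{F}$.'' The inductive step tacitly assumes that $\mathcal{E}$ preserves $\mathcal{F}$-constructibility, i.e.\ that a description of $\mathcal{E}(x)$ can be effectively read off from a description of $x$ together with the finite code of $\mathcal{E}$; this is the genuine content of ``expressible in $\mathcal{F}$'' and should be made explicit, for example by stipulating that $\mathcal{E}$ is presented by a single finite $\mathcal{F}$-formula. A secondary subtlety is the use of countable choice in picking the enumerations $e_n$ uniformly in $n$; this is harmless in the classical ambient theory but deserves at least a remark given the constructive outlook of the paper.
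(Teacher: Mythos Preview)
Your proposal is correct and follows essentially the same two-step argument as the paper: each $S_n$ is countable because its elements are $\mathcal{F}$-constructible, and the countable union of countable sets is countable. Your treatment is more careful than the paper's terse version---the explicit induction, the alternative direct description of elements of $S_\infty$, and the remarks on the informal status of $\mathcal{E}$ and on countable choice are all welcome elaborations rather than deviations.
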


\begin{proof}
Each step \( S_n \) is obtained from \( S_{n-1} \) by a constructively definable operation expressible in \( \mathcal{F} \), and hence produces only countably many new objects. The countable union of countable sets remains countable.
\end{proof}

\subsection{Fractal Limit of Constructivity}

We now propose a conceptual formalization of what we term the \emph{fractal boundary of constructivity}. This notion captures the intuitive idea that constructive methods can endlessly generate new fragments, and each such fragment may extend or refine prior knowledge — yet the entire construction remains confined within a countable totality. The boundary appears to "expand", but in reality, it only proliferates local structure without reaching the uncountable.

\begin{definition}[Fractal Boundary of Constructivity]
Let \( \mathcal{F} \) be a constructive system. The \emph{fractal boundary of constructivity} refers to the asymptotic limit of all possible constructive extension chains in \( \mathcal{F} \). Formally, it is the union:
\[
S^{\mathcal{F}} := \bigcup_{\{S_n\}} \left( \bigcup_{n=0}^\infty S_n \right)
\]
where the outer union ranges over all constructive extension chains \( \{S_n\} \) definable in \( \mathcal{F} \).
\end{definition}

\begin{theorem}[Constructive Closure Theorem]
Let \( \mathcal{F} \) be any formal system with countably many syntactic rules and symbols. Then \( S^{\mathcal{F}} \) is a countable subset of \( \mathbb{R} \).
\end{theorem}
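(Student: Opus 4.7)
The plan is to reduce this theorem to the previous one (Closure under Constructive Extension) by a second, outer application of the "countable union of countables" principle. The key observation is that a constructive extension chain $\{S_n\}$ is not an arbitrary mathematical object; by definition it is entirely specified by a pair $(d_0, d_{\mathcal{E}})$, where $d_0$ is a finite description in $\mathcal{F}$ of the constructible base set $S_0$ and $d_{\mathcal{E}}$ is a finite description in $\mathcal{F}$ of the extension operator $\mathcal{E}$. So the first step is to make this parametrization explicit and treat the "outer union" as indexed by these syntactic pairs rather than by set-theoretic chains.

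Next I would invoke the hypothesis that $\mathcal{F}$ has a countable alphabet and countably many syntactic rules, and cite Lemma~\ref{lem:countable-constructibles} together with its analogue for definable operators to conclude that there are only countably many admissible descriptions $d_0$ and only countably many admissible descriptions $d_{\mathcal{E}}$. Their Cartesian product is therefore countable, so the family of constructive extension chains definable in $\mathcal{F}$ is itself countable. Indexing the chains as $\{S_n^{(k)}\}_{n\in\mathbb{N}}$ for $k\in\mathbb{N}$, we may rewrite
\[
S^{\mathcal{F}} \;=\; \bigcup_{k\in\mathbb{N}} \bigcup_{n\in\mathbb{N}} S_n^{(k)}.
\]

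The third step is to apply the previously proved Closure under Constructive Extension theorem to each inner union, obtaining that $\bigcup_{n} S_n^{(k)}$ is countable for every $k$. A final appeal to the fact that a countable union of countable sets is countable yields countability of $S^{\mathcal{F}}$; it is a subset of $\mathbb{R}$ by construction, since every constructive extension chain was defined to lie in $\mathbb{R}$.

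The only real subtlety, and the step I would treat most carefully, is justifying that the outer indexing is genuinely countable. One must ensure that two chains with extensionally equal content but different syntactic descriptions are not double-counted as something larger than countable (harmless, since we only need an upper bound), and conversely that every chain admissible by the definition does arise from some finite pair of descriptions in $\mathcal{F}$. The latter is immediate from the definition of constructive extension chain, which demands that both $S_0$ and $\mathcal{E}$ be \emph{expressible} in $\mathcal{F}$; hence the syntactic parametrization is exhaustive. Once this is spelled out, the remainder of the argument is purely bookkeeping with countable cardinalities.
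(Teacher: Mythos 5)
Your proposal is correct and follows essentially the same route as the paper's own proof: the paper likewise argues that there are only countably many constructive extension chains definable in \( \mathcal{F} \), that each chain contributes a countable union by the preceding theorem, and that a countable union of countable sets is countable. Your explicit parametrization of chains by syntactic pairs \( (d_0, d_{\mathcal{E}}) \) merely spells out the justification that the paper leaves implicit.
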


\begin{proof}
There are only countably many constructive extension chains definable in \( \mathcal{F} \), and each chain produces a countable union of sets. Thus their total union \( S^{\mathcal{F}} \) is a countable union of countable sets.
\end{proof}

This result reinforces the idea that constructivity is closed under iteration: no matter how one attempts to build "beyond" the constructible, one remains trapped within a countable framework. The uncountable continuum remains inaccessible unless non-constructive axioms are introduced — such as the full power set axiom or arbitrary choice principles.

\subsection{Relation to Existing Work}

The idea that computable methods cannot reach the full continuum is not new. It has deep roots in computable analysis (see, e.g., \cite{Weihrauch2000}, \cite{Brattka2008}) and intuitionistic mathematics (Brouwer, Bishop, Markov). However, the notion of a \emph{fractal boundary}, as an asymptotic structure of all constructive attempts, provides a new way to conceptualize the limitation. It is not merely that the continuum is uncountable — it is that even infinite, layered constructions under strict syntactic regimes will remain forever within countable bounds, reproducing a self-similar expansion of definable structure, but never escaping into the full real line.

In this sense, the fractal boundary operates like a meta-mathematical horizon: constructively approximable from within, but fundamentally unreachable without stepping outside the system.

\section{The Continuum as an Inaccessible Horizon}

Having formalized the boundaries of constructive extension, we now turn to the status of the real line \( \mathbb{R} \) itself. The classical understanding, grounded in ZFC, treats \( \mathbb{R} \) as a complete, uncountable set whose cardinality is \( 2^{\aleph_0} \). This view rests on the power set axiom and the comprehension principle: for every subset of \( \mathbb{N} \), there exists a real number encoding it, typically in binary.

From a constructive standpoint, however, such a notion of completeness is inaccessible. As shown in the previous section, any countable formal system can produce only countable fragments of \( \mathbb{R} \). This leads us to treat the continuum not as an object that can be constructed or even fully described, but as a \emph{meta-mathematical boundary} — a horizon that appears to lie just beyond reach.

\subsection{Formal Inaccessibility of the Continuum}

\begin{theorem}[Constructive Inaccessibility of the Continuum]
Let \( \mathcal{F} \) be any formal system with countably many axioms and inference rules. Then there exists no constructive procedure within \( \mathcal{F} \) that can generate a set \( R \subseteq \mathbb{R} \) such that \( R \) is uncountable and each \( r \in R \) is individually definable within \( \mathcal{F} \).
\end{theorem}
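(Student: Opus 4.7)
The plan is to derive the result directly from the countability of the syntax of \( \mathcal{F} \), via a counting argument analogous to Lemma~\ref{lem:countable-constructibles}. Since \enquote{individually definable within \( \mathcal{F} \)} means each \( r \in R \) is picked out by a finite formula in the language of \( \mathcal{F} \), and there are only countably many such formulas, any collection of individually definable reals must itself be countable. The conclusion then contradicts the supposed uncountability of \( R \).

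Concretely, I would proceed in three steps. First, I would fix a precise notion of \enquote{\( \varphi \) defines \( r \)}: a formula \( \varphi(x) \) in the language of \( \mathcal{F} \), with some fixed coding of reals as Cauchy sequences, Dedekind cuts, or binary expansions, such that \( \mathcal{F} \) proves the sentence expressing that \( r \) is the unique object satisfying \( \varphi \). Second, using the countability of well-formed formulas established in Section~1.1, I would fix a Gödel enumeration \( (\varphi_k)_{k \in \mathbb{N}} \) of formulas with one free variable and assign to each \( r \in R \) the least index \( k \) such that \( \varphi_k \) defines \( r \); this yields a function \( d : R \to \mathbb{N} \). Third, I would observe that \( d \) is injective: if \( d(r) = d(r') \), then a single formula uniquely defines both, forcing \( r = r' \) by the uniqueness clause. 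Hence \( R \) injects into \( \mathbb{N} \) and is therefore at most countable, contradicting the hypothesis.

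The main obstacle is conceptual rather than technical: the argument hinges on pinning down \enquote{individual definability within \( \mathcal{F} \)} in a way that is uniform and non-circular, so that the assignment of a canonical defining formula to each element of \( R \) is well-posed. The crucial point is that the selection of the least Gödel index \( k \) is performed entirely in the metatheory and relies only on the recursive enumerability of \( \mathcal{F} \)'s syntax; it does not invoke any choice principle applied to the reals themselves, which would beg the question. Once this interpretive point is secured, the theorem becomes, in effect, a pointwise reformulation of the Constructive Closure Theorem of the previous section: rather than bounding the union of all constructive extension chains, we bound the range of the partial \enquote{definition} map from formulas to \( \mathbb{R} \), obtaining the same countability ceiling on any set whose elements are each singled out by \( \mathcal{F} \).
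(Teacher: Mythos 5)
Your proposal is correct and follows essentially the same route as the paper's own proof: both reduce the claim to the countability of finite syntactic definitions in \( \mathcal{F} \), so that any set of individually definable reals injects into a countable collection of formulas and hence cannot be uncountable. Your version is simply a more careful elaboration — fixing the meaning of \enquote{defines}, choosing least Gödel indices in the metatheory, and verifying injectivity — of the one-line counting argument the paper gives.
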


\begin{proof}
As shown in previous sections, the number of constructible real numbers in \( \mathcal{F} \) is at most countable, since each must be defined by a finite derivation. Thus, any set \( R \) of constructible reals is countable, even if constructed as a limit or union of extension chains. Hence, no such \( R \) can be uncountable.
\end{proof}

This result is not surprising — it is an immediate consequence of the syntactic nature of formal systems. Yet its consequences are profound: it implies that the continuum, as traditionally conceived, cannot be reached from within constructive mathematics unless its existence is \emph{assumed} at the outset.

\subsection{Horizon Metaphor and Philosophical Implications}

We propose to interpret the continuum constructively as a horizon: a structure that can be approximated indefinitely, but never fully traversed. Every constructive attempt to approach it results in an ever-growing, self-similar structure — like a fractal — that nevertheless remains within the confines of countable definability.

This interpretation resonates with themes in intuitionism (Brouwer), where the continuum is not a completed totality but a \emph{potentially infinite} process. It also aligns with modern views in computable analysis and reverse mathematics, where the power of a formal system is calibrated by the strength of its comprehension axioms and induction schemas (cf. \cite{Simpson2009}).

Our notion of the continuum as a meta-mathematical horizon offers a precise, formal characterization of this inaccessibility. Unlike diagonal arguments or cardinality comparisons, our approach emphasizes that the barrier is not merely one of size — but of expressibility and derivability. It is not that the continuum is “too large” to be reached; it is that its very conception lies beyond what any syntactic system can construct.

\subsection{Models, Postulates, and Ontology}

Of course, one can postulate the existence of \( \mathbb{R} \) axiomatically — as done in ZFC via the power set of \( \mathbb{N} \), or in second-order arithmetic via comprehension over infinite predicates. But these are ontological choices, not constructive necessities.

The difference is fundamental. In a model-theoretic framework, one can construct models of set theory in which \( \mathbb{R} \) behaves differently: e.g., where the continuum hypothesis holds or fails, where \( \mathbb{R} \) is not well-orderable, or where the cardinality of the continuum is strictly less than \( 2^{\aleph_0} \) (under large cardinal assumptions or absence of choice). This shows that the continuum is not a fixed mathematical object but a schema that behaves differently depending on the ambient axiomatic system.

In constructive mathematics, the very idea of an actual, complete continuum must be replaced by an unfolding process. Our results suggest that this process — no matter how iterated or enriched — never produces more than countable definable content.

\begin{corollary}
Within any countable formal system, the continuum exists only as a schema of incomplete approximations. Its totality cannot be constructively realized without invoking non-syntactic principles.
\end{corollary}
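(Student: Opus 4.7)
The plan is to derive the corollary as a direct consequence of the two preceding theorems, treating the philosophical-sounding phrases (\emph{schema of incomplete approximations}, \emph{totality}, \emph{non-syntactic principles}) as shorthand for objects already formalized in the paper. First I would fix the meaning of \emph{totality}: a set $R \subseteq \mathbb{R}$ realizes the continuum constructively in $\mathcal{F}$ only if (i) $R$ is obtained as (the union of) a constructive extension chain in $\mathcal{F}$, and (ii) each $r \in R$ is individually definable in $\mathcal{F}$. With this reading, the assertion splits into two parts: a negative claim (no such $R$ can be all of $\mathbb{R}$), and a positive claim (what does remain is the stratified fragment $S^{\mathcal{F}}$).

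For the negative part I would simply invoke the Constructive Inaccessibility Theorem: any $R$ satisfying (i)--(ii) is countable, whereas $\mathbb{R}$ is uncountable on the ambient classical reading, so $R \subsetneq \mathbb{R}$. This already shows that no constructive procedure in $\mathcal{F}$ produces the full continuum. For the positive part I would appeal to the Constructive Closure Theorem: the supremum of all constructive approximations from within $\mathcal{F}$ is exactly $S^{\mathcal{F}}$, which is countable and therefore strictly smaller than $\mathbb{R}$. The gap $\mathbb{R} \setminus S^{\mathcal{F}}$ is nonempty, which is precisely the statement that the continuum survives only as an open-ended schema $\{S_n\}$ of approximations rather than as a completed object.

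For the closing clause—\emph{without invoking non-syntactic principles}—I would argue contrapositively: suppose some procedure did deliver $\mathbb{R}$ in its entirety inside $\mathcal{F}$. Then every real would be individually captured by a finite derivation, contradicting Lemma \ref{lem:countable-constructibles}. Hence any axiomatic route to the full continuum (power set of $\mathbb{N}$, impredicative comprehension, full choice, and the like) must import an ingredient whose content is not exhausted by the syntactic enumeration of formulas and proofs of $\mathcal{F}$; these are exactly what the corollary calls \emph{non-syntactic principles}.

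The main obstacle, as I see it, is not technical but interpretive: the corollary is essentially a reformulation of the two previous theorems, so the only real work is to justify the identification of \emph{schema of incomplete approximations} with the constructive extension chains of the previous section, and of \emph{totality} with a set-theoretic realization of $\mathbb{R}$ satisfying (i)--(ii). Once these identifications are made explicit, the proof reduces to citing the Constructive Closure and Constructive Inaccessibility Theorems and noting that their hypotheses are met by any countable $\mathcal{F}$.
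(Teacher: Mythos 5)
Your proposal is correct and matches the paper's (implicit) argument: the paper offers no explicit proof, treating the corollary as an immediate consequence of the Constructive Inaccessibility and Constructive Closure theorems together with Lemma~\ref{lem:countable-constructibles}, which is exactly the route you take. Your added care in pinning down \emph{totality} and \emph{schema of incomplete approximations} as formal surrogates is a reasonable elaboration rather than a different approach.
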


\section{Meta-Theoretic Consequences}

The theorems established in the previous sections suggest a general and profound limitation: no constructive formal system — no matter how expressive — can produce an uncountable set whose elements are individually definable within the system. This boundary is not merely a technicality; it reflects a deep structural feature of all countable systems of formal reasoning.

\subsection{Expressibility and the Constructive Ceiling}

At the core of our results lies the observation that formal derivations, syntactic expressions, and algorithms are themselves countable in number. This places an intrinsic ceiling on the expressive power of any system \( \mathcal{F} \) that relies solely on such mechanisms. In particular:

\begin{itemize}
    \item Every object definable in \( \mathcal{F} \) must have a finite representation in its language.
    \item There are only countably many such representations.
    \item Therefore, the total universe of constructible objects is countable, regardless of what \( \mathcal{F} \) attempts to define.
\end{itemize}

This argument does not depend on the specific details of the system \( \mathcal{F} \); it applies equally to Peano arithmetic, second-order arithmetic, intuitionistic logic, and fragments of ZFC. What matters is that \( \mathcal{F} \) is syntactically enumerable.

\subsection{Connection to Incompleteness and Reverse Mathematics}

The limitations revealed by our theorems are closely aligned with other well-known meta-mathematical boundaries — most notably Gödel’s incompleteness theorems and the stratification of logical strength in reverse mathematics.

Gödel’s theorem \cite{Godel1931} showed that no sufficiently expressive formal system can prove all truths about arithmetic from within itself. Analogously, we show that no countable formal system can generate the full structure of the continuum by constructive means. Both results arise from the same root: the expressive ceiling imposed by syntactic finiteness and enumerability.

In the framework of \emph{reverse mathematics} \cite{Simpson2009}, one analyzes the logical strength of mathematical theorems by determining the weakest axiom system in which they are provable. The base system \( \mathrm{RCA}_0 \) (Recursive Comprehension Axiom) captures computable mathematics and is widely regarded as a formal embodiment of constructive reasoning. However, many fundamental results involving the real line — such as the Bolzano–Weierstrass theorem, the completeness of \( \mathbb{R} \), or basic compactness principles — require stronger systems, such as \( \mathrm{ACA}_0 \), \( \mathrm{ATR}_0 \), or \( \Pi^1_1\text{-}\mathrm{CA}_0 \), each invoking increasingly non-constructive comprehension axioms.

Thus, reverse mathematics offers formal confirmation of our thesis: the full continuum cannot be derived within systems that only admit computable sets and functions. What we call the \emph{fractal boundary of constructivity} can be viewed as the upper envelope of what is achievable within \( \mathrm{RCA}_0 \), even when extended by iterated constructive procedures.

\begin{quote}
\textit{The strength of a formal system required to define the continuum exceeds that of any system restricted to computable or syntactically generable content.}
\end{quote}

While systems like \( \mathrm{ACA}_0 \) can "construct" the real numbers by coding them as sequences of rationals or functions, these constructions remain embedded in a countable framework. The objects defined are merely proxies — syntactic encodings — for the ideal continuum, not its full instantiation. From our perspective, such models do not collapse the boundary, but rather push its perceived location outward, while still remaining within the fractal countable domain.

This aligns with insights from computable analysis \cite{Weihrauch2000}, where the set of computable reals is dense but meager, and from intuitionistic logic, where the continuum is a growing process rather than a completed set \cite{Bishop1985}. Our framework synthesizes these perspectives, emphasizing that uncountability should be viewed not merely as a cardinality distinction, but as a threshold of constructive inaccessibility.

\subsection{Toward a Formal Theory of Constructive Boundaries}

Our formulation of the \emph{fractal boundary of constructivity} invites a more general meta-theoretic program. Rather than treat uncountability as a cardinality claim, we interpret it as a measure of inexpressibility under constructive constraints.

This perspective motivates the development of a formal theory of constructive boundaries — an analogue to the arithmetical and analytical hierarchies, but grounded in the syntactic generative capacity of formal systems. For instance:

\begin{itemize}
    \item One may define a hierarchy of constructible fragments of \( \mathbb{R} \), indexed by the depth or complexity of definitional layers.
    \item This hierarchy remains countable at each stage, but increases in richness and self-similarity.
    \item The full continuum remains a limit object, never achieved within any countable step.
\end{itemize}

This opens the door to a structural understanding of uncountability not as a size but as a failure of syntactic closure — a gap between what can be constructed and what must be assumed.

\begin{corollary}
The uncountability of the continuum can be reformulated as a meta-theoretic principle: \emph{no countable formal system is constructively complete with respect to the real line}.
\end{corollary}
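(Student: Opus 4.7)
The plan is to unpack the informal phrase ``constructively complete with respect to the real line'' into a precise condition on $\mathcal{F}$, and then reduce the corollary to theorems already established. Specifically, I would declare a countable formal system $\mathcal{F}$ to be \emph{constructively complete with respect to $\mathbb{R}$} if $S^{\mathcal{F}} = \mathbb{R}$, that is, if every real number is either individually definable in $\mathcal{F}$ or reachable by some constructive extension chain expressible in $\mathcal{F}$. Under this reading, the corollary becomes the assertion that no countable $\mathcal{F}$ satisfies $S^{\mathcal{F}} = \mathbb{R}$, which can be matched directly against the Constructive Closure Theorem and the Constructive Inaccessibility of the Continuum.

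The key steps, in order, are as follows. First, I would fix an arbitrary countable formal system $\mathcal{F}$ with countably many symbols, axioms, and inference rules, so that the hypotheses of the Constructive Closure Theorem apply. Second, I would invoke that theorem to conclude that $S^{\mathcal{F}}$ is countable, as a countable union of countable sets indexed by the constructive extension chains definable in $\mathcal{F}$. Third, I would juxtapose this with the (classically assumed) uncountability of $\mathbb{R}$ to obtain the strict inclusion $S^{\mathcal{F}} \subsetneq \mathbb{R}$. Fourth, I would reinterpret this proper inclusion meta-theoretically: the non-emptiness of $\mathbb{R} \setminus S^{\mathcal{F}}$ is precisely the failure of constructive completeness for $\mathcal{F}$. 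Since $\mathcal{F}$ was arbitrary, the universal statement in the corollary follows.

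The main obstacle is not technical but conceptual, and it concerns how to present the result without circularity. The corollary reformulates classical uncountability as a constructive inexpressibility principle, yet the natural proof sketch above leans on the classical uncountability of $\mathbb{R}$ to exhibit the gap. I would therefore frame the result as a \emph{reformulation}, or biconditional, rather than a derivation from strictly weaker premises: the content is that the meta-theoretic statement ``every countable $\mathcal{F}$ leaves reals outside $S^{\mathcal{F}}$'' is equivalent to classical uncountability, given the Constructive Closure Theorem. If one wishes a purely syntactic version avoiding any appeal to classical cardinality, I would instead prove the weaker schema: for every countable $\mathcal{F}$ there exists a strictly more expressive system $\mathcal{F}'$ with $S^{\mathcal{F}} \subsetneq S^{\mathcal{F}'}$, showing that no fixed countable system can be closed under its own meta-extensions. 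Crucially, since the paper has excluded diagonal arguments, this latter version must be obtained not by constructing a diagonal witness but by exhibiting a fresh constant symbol or reflection principle expressible in $\mathcal{F}'$ but not in $\mathcal{F}$; this is the delicate point I expect to require the most care in the final write-up.
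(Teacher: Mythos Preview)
Your proposal is correct and aligns with the paper's own treatment: the corollary is stated in the paper without any explicit proof, as an immediate reformulation of the Constructive Closure Theorem and the Constructive Inaccessibility of the Continuum, which is precisely the reduction you carry out. Your additional care about the apparent circularity (framing the result as a biconditional reformulation rather than a derivation from weaker premises) and your alternative purely syntactic schema go beyond what the paper itself provides, but they are sound refinements rather than deviations.
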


\subsection{Comparison with Reverse Mathematics}

To better contextualize our concept of the \emph{fractal boundary of constructivity}, we compare it to the central subsystems of second-order arithmetic studied in reverse mathematics \cite{Simpson2009}. Each subsystem formalizes a different degree of comprehension and induction, corresponding to increasing logical strength and decreasing constructivity.

The following table presents a high-level correspondence between the constructive reach of each subsystem and its ability to describe the continuum:

\begin{center}
\renewcommand{\arraystretch}{1.4}
\resizebox{\textwidth}{!}{%
\footnotesize
\begin{tabular}{@{} c >{\raggedright\arraybackslash}p{4.2cm} >{\raggedright\arraybackslash}p{8.8cm} @{}}
\toprule
\textbf{System} & \textbf{Logical Strength} & \textbf{Relation to Continuum / Constructivity} \\
\midrule
\( \mathrm{RCA}_0 \) &
Recursive comprehension only; computable functions &
Captures only computable reals; corresponds to the base layer of our fractal boundary. All definable sets are countable. \\
\addlinespace
\( \mathrm{WKL}_0 \) &
\( \mathrm{RCA}_0 \) + Weak König’s Lemma &
Allows compactness arguments. Still limited to countable encodings; extends the fractal structure, but remains within countability. \\
\addlinespace
\( \mathrm{ACA}_0 \) &
Arithmetic comprehension; non-computable arithmetical sets &
Can code full sets of reals using sequences. Simulates continuum behavior via encodings; corresponds to recursive closure of the boundary. \\
\addlinespace
\( \mathrm{ATR}_0 \) &
Arithmetical transfinite recursion &
Enables definitions over well-founded trees and ordinals. Expands definable hierarchies, but remains countable. Reflects deeper self-similarity in the boundary. \\
\addlinespace
\( \Pi^1_1\text{-}\mathrm{CA}_0 \) &
Full \( \Pi^1_1 \)-comprehension &
Capable of defining full analytic sets. Transcends the boundary completely by invoking non-constructive set existence. \\
\bottomrule
\end{tabular}%
}
\end{center}

This comparison illustrates that our fractal boundary corresponds roughly to the union of structures accessible within \( \mathrm{RCA}_0 \), \( \mathrm{WKL}_0 \), and fragments of \( \mathrm{ACA}_0 \) — all of which preserve countability and algorithmic definability. As one moves to stronger systems like \( \mathrm{ATR}_0 \) and \( \Pi^1_1\text{-}\mathrm{CA}_0 \), the ability to transcend the boundary depends on accepting non-constructive existence principles.

From our perspective, the appearance of continuity or uncountability in systems like \( \mathrm{ACA}_0 \) is synthetic: it arises through encoding and indirect representation, not through direct constructive realization. The boundary is never truly crossed unless the axioms of the system allow for set-theoretic assumptions beyond effective generation.

This further supports our thesis that the continuum, as a complete object, lies strictly beyond the reach of any countable constructive system — and that reverse mathematics not only confirms this, but helps to map its precise location in the logical landscape.

\section{Philosophical Interpretations and Future Work}

The results and interpretations presented in this paper have implications that reach beyond the technical realm of formal systems. They touch on foundational questions concerning the nature of mathematical objects, the limits of formal reasoning, and the ontological status of the continuum.

\subsection{Continuum as Constructive Process or Ideal Totality}

One of the most persistent divisions in the philosophy of mathematics concerns the interpretation of infinite structures. In classical set theory, the continuum is treated as a completed, actual infinite — a totality that exists independently of our ability to enumerate or construct its elements. In contrast, constructive and intuitionistic traditions, going back to Brouwer, insist that the continuum is never given as a completed object, but only as a potentially infinite process of approximation.

Our notion of the \emph{fractal boundary of constructivity} reinforces this latter view. By showing that any constructive extension process yields only countable subsets of \( \mathbb{R} \), we provide a formal structure supporting the claim that the continuum cannot be constructed, only approached. This echoes the intuitionistic slogan: “The continuum is not a set but a method.”

From this perspective, classical claims about the uncountability of the real numbers are understood not as statements about constructible objects, but about abstract idealizations — statements that describe not what can be generated, but what must be postulated.

\subsection{Between Formalism and Platonism}

Our analysis also situates itself in the longstanding debate between formalism and Platonism. Formalists view mathematics as manipulation of symbols under well-defined rules, while Platonists regard mathematical entities as existing independently of our representations.

The formal impossibility of exhaustively constructing the continuum within any countable system can be seen as a bridge between these views:

\begin{itemize}
    \item It supports the formalist’s skepticism: if something cannot be generated by any rule-based system, perhaps it has no formal content.
    \item But it also vindicates the Platonist: the fact that such objects remain unreachable within any constructive system suggests that their status transcends formal description.
\end{itemize}

Our framework does not commit to either position, but it clarifies the terms of debate: the continuum can be viewed either as a limit of all constructive activity, or as a meta-mathematical entity, defined by what lies beyond that limit.

\subsection{Directions for Further Research}

The concept of the fractal boundary of constructivity opens several promising lines of inquiry:

\begin{itemize}
    \item \textbf{Hierarchical models:} One could define and explore a hierarchy of constructively generable fragments of \( \mathbb{R} \), similar to the arithmetical or hyperarithmetical hierarchies, where each level corresponds to an iteration of constructive extension.
    
    \item \textbf{Comparative axiomatic frameworks:} The boundary could be studied in relation to various subsystems of second-order arithmetic (e.g., \( \mathrm{RCA}_0 \), \( \mathrm{ACA}_0 \), \( \mathrm{ATR}_0 \)) to better understand where specific “jumps” in definability occur.

    \item \textbf{Topos-theoretic and categorical generalizations:} Using categorical logic and type theory, one might reformulate the fractal boundary as a structure internal to a topos, representing internal limitations on definability and closure under constructive processes.

    \item \textbf{Constructive metrics of expressiveness:} The boundary could also be formalized in terms of complexity-theoretic or computability-theoretic metrics — e.g., characterizing which classes of functions or sets lie on the edge of expressibility in a given formal system.
\end{itemize}

Each of these directions contributes to a broader program: to understand uncountability not as a cardinal property alone, but as a dynamic threshold — a horizon that delineates the expressible from the postulated.

\section{Fractal Countability: A Constructive and Stratified Generalization}

In this section, we propose an extension of the classical notion of countability based on the iterative construction of definable elements. Rather than viewing countability as an absolute cardinal property, we define it as a closure condition under a structured process of constructive extension. This leads us to the concept of \emph{fractal countability}, which formalizes the idea that increasingly complex fragments of the continuum can be brought into a countable regime through progressive augmentation of the syntactic universe.

\subsection{Motivation and Intuition}

Traditional countability assumes a fixed enumeration: a one-to-one correspondence with \( \mathbb{N} \). In constructive mathematics, this is understood as effective enumerability. However, as shown in previous sections, any attempt to generate uncountable sets via constructive methods results in countable fragments.

Yet these fragments may be extended through definable closure operations, each producing new elements outside the previous scope. If we imagine each such operation as adding new "addresses" or "names" to our system of reference, then the universe of countable objects becomes layered, recursively structured, and indefinitely extensible — exhibiting fractal-like stratification.

\subsection{Definition of Fractal Countability}

\begin{definition}[Fractally Countable Set]
Let \( \mathcal{F}_0 \) be a base constructive formal system (e.g., \( \mathrm{RCA}_0 \)), and let \( \{ \mathcal{F}_n \}_{n \in \mathbb{N}} \) be a sequence of syntactic extensionssuch that each \( \mathcal{F}_{n+1} \) is a conservative extension of \( \mathcal{F}_n \) with respect to arithmetical sentences, such that each \( \mathcal{F}_n \) defines a countable set \( S_n \subseteq \mathbb{R} \).

We say that a set \( S \subseteq \mathbb{R} \) is \emph{fractally countable} relative to \( \mathcal{F}_0 \) if:
\begin{enumerate}
    \item \( S = \bigcup_{n=0}^\infty S_n \);
    \item No single finite-stage system \( \mathcal{F}_k \) can define all elements of \( S \), i.e., \( S \not\subseteq \mathrm{Def}(\mathcal{F}_k) \) for any fixed \( k \in \mathbb{N} \);
    \item There exists no uniform effective enumeration procedure in \( \mathcal{F}_0 \) or in any finite-stage system \( \mathcal{F}_k \) that generates \( S \) as a whole;
    \item Each extension \( \mathcal{F}_n \) remains within a constructive framework (i.e., avoids full second-order or \( \Pi^1_1 \)-comprehension). The restriction on uniform enumeration ensures that \( S^\infty \) remains externally generated with respect to any finite \( \mathcal{F}_n \), thereby avoiding self-referential paradoxes such as Richard's paradox.
\end{enumerate}
\end{definition}

\begin{notation}
We write \( S^\infty = \bigcup_{n=0}^\infty S_n \) to denote the full fractally countable closure over base system \( \mathcal{F}_0 \).
\end{notation}

\noindent We write \( \mathrm{Def}(\mathcal{F}_n) \) to denote the class of sets definable within system \( \mathcal{F}_n \) via its internal syntactic resources.

\noindent In other words, \( S \) is generated through a structured progression of countable stages, analogous in form (but not in ontology) to transfinite constructions, each expanding the system's definitional scope but remaining within constructive bounds.

\subsection{Properties and Interpretation}

\begin{itemize}
    \item Every fractally countable set is countable in the classical sense, but not every countable set is necessarily fractally countable from a given base \( \mathcal{F}_0 \).
    \item Fractal countability is \emph{relative} to the generative capacity of the base system and reflects the layered structure of definability.
    \item The class of fractally countable sets forms a closure under definitional extension, but this closure remains strictly below the power set of \( \mathbb{N} \).
\end{itemize}

This gives rise to a new perspective: the process of counting is not absolute but structurally expandable. Instead of assuming a single universal enumeration, we consider a dynamically enriched indexing language — a growing syntactic "dictionary" through which more and more complex objects become nameable.

\subsection{Implications and Potential Applications}

Fractal countability offers a refined lens for studying semi-constructive systems — those that go beyond basic recursion but stop short of full comprehension. It may provide:

\begin{itemize}
    \item A framework for analyzing constructive approximations to analytic or topological structures;
    \item A formal tool for bounding the expressive growth of proof assistants and formal verification systems;
    \item A basis for comparing the relative strength of constructive subsystems beyond standard reverse mathematics.
\end{itemize}

Future work may define hierarchies of fractal countability, explore its connections with hyperarithmetical hierarchies, or develop categorical semantics for layered constructive extensions.

\subsection{Example and Formal Statement: Fractal Extension over Computable Reals}

Let \( S_n = \mathbb{R} \cap \Delta^0_n \), the set of real numbers whose binary expansions are recursive in the \( n \)-th Turing jump of the empty set \( \emptyset^{(n)} \). Let \( S = \bigcup_{n=0}^\infty S_n \).

Each \( S_n \) is countable and definable in a system \( \mathcal{F}_n \supseteq \mathcal{F}_0 \) with arithmetical comprehension of bounded complexity. The resulting union \( S \) contains all hyperarithmetical reals.

\begin{example}
Let each \( \mathcal{F}_n \) extend \( \mathcal{F}_0 \) with bar recursion of finite type up to level \( n \). The union of reals definable in these systems may include reals not in \( HYP \), especially when combined with weak forms of choice. Thus, fractal countability can potentially go beyond classical hyperarithmetical bounds under enriched constructive settings.
\end{example}

\begin{theorem}
Let \( \mathcal{F}_0 = \mathrm{RCA}_0 \). Then there exists a set \( S^\infty \subseteq \mathbb{R} \) that is fractally countable relative to \( \mathcal{F}_0 \), such that \( S \) is not computably enumerable within any finite stage \( \mathcal{F}_k \), nor within the union \( \mathcal{F}_\omega = \bigcup_{n < \omega} \mathcal{F}_n \).
\end{theorem}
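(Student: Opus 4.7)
The plan is to realize $S^\infty$ via the arithmetical hierarchy, using iterated Turing jumps as the stratifying resource. I would let $\mathcal{F}_n$ denote $\mathrm{RCA}_0$ extended by an axiom asserting that the $n$-fold Turing jump $\emptyset^{(n)}$ exists as a set, so that each $\mathcal{F}_n$ is a conservative arithmetical extension of $\mathcal{F}_0$ and the union $\mathcal{F}_\omega = \bigcup_{n<\omega} \mathcal{F}_n$ grants access to every finite Turing jump but to no single jump of infinite order. Set $S_n := \{ r \in \mathbb{R} : r \leq_T \emptyset^{(n)} \}$; each $S_n$ is countable (it is indexed by Turing functionals), definable in $\mathcal{F}_n$, and satisfies $S_n \subsetneq S_{n+1}$ since $\emptyset^{(n+1)} >_T \emptyset^{(n)}$ by Post's theorem. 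Let $S^\infty := \bigcup_{n<\omega} S_n$, the set of arithmetically definable reals, which is countable as a countable union of countable sets.

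Next I would verify the four clauses of fractal countability. Clauses (1) and (4) hold by construction. Clause (2) follows from the Turing-incomparability argument just cited: $\emptyset^{(k+1)} \in S^\infty \setminus \mathrm{Def}(\mathcal{F}_k)$ for every $k$. The decisive clause is (3), and this is where the real work lies. The argument I would run is a syntactic-compactness step followed by a diagonalization. First, any formula $\phi(i,r)$ in the language of $\mathcal{F}_\omega$ mentions only finitely many of the new jump axioms, and any proof in $\mathcal{F}_\omega$ that $\phi$ defines an enumeration of $S^\infty$ likewise uses only finitely many. Hence any purported enumeration in $\mathcal{F}_\omega$ already lives in some finite stage $\mathcal{F}_k$ and presents a function $e : \mathbb{N} \to 2^{\mathbb{N}}$ whose graph sits within $\Delta^0_{k+2}$. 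Then the diagonal real $d(n) := 1 - e(n)(n)$ is itself arithmetical, hence in $S^\infty$, yet differs from every $e(n)$, contradicting surjectivity of $e$ onto $S^\infty$. Specialized to a fixed $\mathcal{F}_k$, the same argument yields the finite-stage half of the claim.

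The step I expect to be the main obstacle is the compactness reduction from $\mathcal{F}_\omega$ to a single stage $\mathcal{F}_k$. Since $\mathcal{F}_\omega$ is not a single axiomatic theory in a fixed language but a directed union, one must first settle what counts as an enumeration provable in $\mathcal{F}_\omega$: a single formula interpreted in the eventual language, a coherent family of stagewise formulas, or an arithmetical predicate in the limit language. The single-formula and limit-predicate cases succumb to the compactness-plus-diagonal argument above, but the coherent-family case requires an additional degree-theoretic step showing that any such aggregated enumeration would compute $\emptyset^{(\omega)}$, which is not arithmetical and hence not available in $\mathcal{F}_\omega$. Executing this uniformly—while respecting the conservativity restriction imposed in the definition of fractal countability—is the delicate part, and I would expect to close it off by invoking the properness of the arithmetical hierarchy together with a standard $\Pi^1_1$-bound on uniform enumerations of arithmetical reals.
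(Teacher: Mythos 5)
Your construction of the stages is the same as the paper's: $S_n$ is the set of reals recursive in $\emptyset^{(n)}$, each housed in a conservative extension $\mathcal{F}_n$ of $\mathrm{RCA}_0$ asserting the existence of the $n$-th jump. Where you diverge is in the justification of non-enumerability, and your route is both more elementary and more accurate. The paper's sketch identifies $S^\infty$ with $\mathbb{R}\cap HYP$ and appeals to the non-collapse of the hyperarithmetical hierarchy below $\omega_1^{CK}$; but a union over \emph{finite} jumps yields only the arithmetical reals, a proper subset of $HYP$ (reaching all of $HYP$ would require transfinite iteration of the jump along notations up to $\omega_1^{CK}$, which the finitely-indexed chain $\{\mathcal{F}_n\}_{n\in\mathbb{N}}$ does not provide). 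You correctly identify $S^\infty$ as the arithmetical reals and then prove clause (3) by the right tool for that set: finiteness of proofs pushes any purported enumeration in $\mathcal{F}_\omega$ down into some $\mathcal{F}_k$, where its graph is $\Delta^0_{k+2}$, and the diagonal real is arithmetical, hence in $S^\infty$, yet omitted --- so Post's theorem (properness of the arithmetical hierarchy) does all the work and no appeal to $\omega_1^{CK}$ is needed. What the paper's approach would buy, if the construction were upgraded to transfinite stages, is the stronger conclusion that even $\Pi^0_\alpha$-definable enumerations fail for all $\alpha<\omega_1^{CK}$; what your approach buys is a proof that actually matches the object constructed. The one point you rightly flag as delicate --- what counts as a single enumeration in the directed union $\mathcal{F}_\omega$ --- is not addressed by the paper at all, and your compactness-plus-coherent-family case analysis is a genuine improvement, though the concluding appeal to a ``$\Pi^1_1$-bound on uniform enumerations'' should be replaced by the simpler observation that any coherent family definable over $\mathcal{F}_\omega$ would compute $\emptyset^{(\omega)}$, which is not arithmetical and hence lies outside every stage.
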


\begin{proof}[Sketch]
Each \( S_n \) is definable in a suitable conservative extension of \( \mathcal{F}_0 \) and consists of reals recursive in \( \emptyset^{(n)} \). The set \( S^\infty = \mathbb{R} \cap HYP \) is countable but not recursively enumerable: the hyperarithmetical hierarchy does not collapse below \( \omega_1^{CK} \), the least non-computable ordinal. Hence, no single oracle or formal system can uniformly generate all of \( S \), and no constructive system below \( \Pi^1_1 \)-comprehension can define it in full.
\end{proof}

\subsection{Comparison to Hyperarithmetical Hierarchies}

The structure \( S^\infty \) introduced above closely resembles the set \( \mathbb{R} \cap HYP \) of hyperarithmetical reals, yet the method of construction differs conceptually. Hyperarithmetical sets are typically defined via transfinite recursion up to \( \omega_1^{CK} \), the first non-computable ordinal. In contrast, fractal countability focuses on syntactic, finitely-indexed definability and conservative extensions of a formal system.

Thus, while \( \mathbb{R} \cap HYP \subseteq S \), our framework emphasizes:
\begin{itemize}
    \item Conservative syntactic growth over recursion-theoretic transfinite definitions;
    \item Expressibility via formal language extensions, not ordinal indexing;
    \item A structurally layered approximation to the continuum from within the countable domain.
\end{itemize}

We conjecture that fractally countable sets coincide with \( HYP \cap \mathbb{R} \) in extension, but differ fundamentally in their construction and philosophical grounding — offering a stratified, internal alternative to the classical recursion-theoretic view.

\subsection{Philosophical Status of Fractal Countability}

Fractal countability offers a constructivist alternative to both classical cardinality and ordinal-indexed recursion theory, emphasizing process-relative definability over completed totalities. It emphasizes the process-dependent growth of definable content and avoids reliance on completed totalities such as the power set of \( \mathbb{N} \) or impredicative comprehension.

Although fractal countability avoids direct reference to ordinals such as \( \omega_1^{CK} \), the unbounded definitional hierarchy reflects structural limits familiar from ordinal analysis and proof theory. From an intuitionistic perspective, fractal countability aligns with the idea of the continuum as a potential infinity — a domain that can be approached through increasingly refined constructive procedures, but never exhausted by any fixed system.

This view resonates with Brouwer’s rejection of the continuum as a completed set and provides a syntactic model of progressive definability, potentially applicable to formal verification, type theory, and foundations of proof assistants.

\subsection{Future Directions: Relation to Hyperarithmetical Hierarchies}

Fractal countability bears structural resemblance to known hierarchies in recursion theory, such as the hyperarithmetical hierarchy. While hyperarithmetical sets are defined via Turing jumps and ordinal notations below \( \omega_1^{CK} \), fractal countability proceeds through definitional layering without appeal to ordinals.

Investigating whether fractal countability corresponds to a syntactically restricted subclass of hyperarithmetical sets may yield new classifications of intermediate definability between classical computability and full second-order comprehension.

In particular, the intersection of fractal countability and the hyperarithmetical hierarchy could provide a fine-grained stratification of constructively accessible reals, indexed not by external recursion theory, but by internal definitional closure.

\begin{center}
\renewcommand{\arraystretch}{1.4}
\resizebox{\textwidth}{!}{%
\footnotesize
\begin{tabular}{@{} c >{\raggedright\arraybackslash}p{4.2cm} >{\raggedright\arraybackslash}p{8.8cm} @{}}
\toprule
\textbf{Level} & \textbf{Constructive System \( \mathcal{F}_n \)} & \textbf{Defined Subset \( S_n \subseteq \mathbb{R} \) and its Role in the Stratified Structure of Fractal Countability} \\
\midrule
\( n = 0 \) &
\( \mathrm{RCA}_0 \) (Recursive comprehension) &
Defines computable reals; forms the base layer \( S_0 \) of fractal countability. All elements are recursive, and the system reflects classical computable mathematics. \\
\addlinespace
\( n = 1 \) &
\( \mathrm{ACA}_0 \) (Arithmetic comprehension) &
Extends to arithmetically definable reals; \( S_1 \) includes reals recursive in \( 0' \). Adds reals not accessible in \( \mathcal{F}_0 \) but still definable via constructive arithmetical predicates. \\
\addlinespace
\( n = 2 \) &
\( \mathrm{ATR}_0 \) (Arithmetical transfinite recursion) &
Adds reals definable via well-founded recursion on arithmetical trees; \( S_2 \) expands expressiveness to include hierarchy-based constructions, preserving countability. \\
\addlinespace
\( n = 3 \) &
\( \Pi^1_1\text{-}\mathrm{CA}_0^- \) (restricted \( \Pi^1_1 \)-comprehension without full impredicativity) &
Incorporates certain analytic sets via restricted comprehension; constructs reals recursive in higher-type functionals or through second-order parameters. Still countable when relativized. \\
\addlinespace
\( n \to \infty \) &
Union of all conservative constructive extensions &
Fractal closure \( S^\infty \) remains countable, yet exhibits structured growth analogous to transfinite stages — without invoking actual transfinite objects or impredicative comprehension. Represents the constructive horizon of definability without invoking full power set axioms. \\
\bottomrule
\end{tabular}%
}
\end{center}

\noindent This table illustrates the constructive progression of definable subsets of \( \mathbb{R} \), organized by the strength of their generating systems. Each stage contributes to the formation of a fractally countable set, expanding the expressive capacity without crossing into uncountability.

A diagram visualizing the inclusion chain \( S_0 \subset S_1 \subset \dots \subset S^\infty \) alongside the systems \( \mathcal{F}_n \) may further clarify the stratified nature of this definability framework.

This framework is conceptually related to Feferman's explicit mathematics and stratified systems, as well as constructive set theories studied by Troelstra and Myhill. Further categorical formulations may allow integration into modern type-theoretic and proof-assistant foundations.

\section*{Conclusion}

This paper has presented a meta-theoretical analysis of the limitations inherent in constructive formal systems, with particular focus on their inability to generate or characterize uncountable structures such as the continuum. We have introduced the concept of the \emph{fractal boundary of constructivity} — the asymptotic limit of all constructive definability within a countable formal framework.

Through a series of formal observations and theorems, we have shown that:

\begin{itemize}
    \item Every constructive process, no matter how iterated or extended, yields only countable collections of definable objects;
    \item Attempts to approach uncountable sets within constructive systems inevitably reproduce self-similar countable fragments;
    \item The full structure of the continuum cannot be realized within any such system without invoking non-constructive principles.
\end{itemize}

As an extension of this perspective, we proposed the concept of \emph{fractal countability}, which generalizes classical countability via layered constructive enrichment. Rather than treating countability as a static property, we view it as a process-dependent closure over a hierarchy of definitional extensions. This allows us to capture a broader spectrum of constructively accessible sets while remaining strictly within the countable domain. Fractal countability offers a formal tool for analyzing the stratified growth of definability across conservative syntactic systems.

Our approach complements existing results in computable analysis, reverse mathematics, and foundational logic, while offering a new interpretative lens: uncountability is not merely a matter of size, but of inexpressibility. The continuum, viewed through this lens, emerges not as a given set, but as a horizon — a boundary that constructive reasoning can approach indefinitely, yet never cross.

This perspective invites further exploration of the limits of definability, the gradations of formal expressiveness, and the philosophical status of mathematical infinities. Whether one views the continuum as a completed totality or as a generative process, its place at the edge of constructivity remains one of the most profound features of mathematical thought.


\end{document}